\documentclass{amsart}

\usepackage{amssymb,amsmath,amsthm,latexsym,amscd}
\usepackage{epsfig}
\usepackage{psfrag}

\newtheorem{Theorem}{\bf Theorem}
\newtheorem{lemma}[Theorem]{\bf Lemma}
\newtheorem{proposition}[Theorem]{\bf Proposition}

\newtheorem{example}[Theorem]{\bf Example}
\newtheorem{remark}[Theorem]{\bf Remark}
\newtheorem{theorem}[Theorem]{\bf Theorem}

\def\qed{\hfill$\Box$}
\newcommand{\be}{\begin{equation}}
\newcommand{\ee}{\end{equation}}

\def\hpic #1 #2 {\mbox{$\begin{array}[c]{l} 
\epsfig{file=#1,height=#2}\end{array}$}}
\def\wpic #1 #2 {\mbox{$\begin{array}[c]{l} 
\epsfig{file=#1,width=#2}\end{array}$}}

\begin{document}

\title[Generators for subfactor planar algebras]{A note on generators for finite depth subfactor planar algebras}

\author{Vijay Kodiyalam}
\address{The Institute of Mathematical Sciences, Chennai, India}
\email{vijay@imsc.res.in,stupurani@gmail.com}
\author{Srikanth Tupurani}

\subjclass{Primary 46L37; Secondary 57M99}

\keywords{Subfactor planar algebra, presentation}

\begin{abstract} 
We show that a subfactor planar algebra of finite depth $k$
is generated by a single $s$-box, for $s \leq min\{k+4,2k\}$.
\end{abstract}
\maketitle

The main result of \cite{KdyTpr2010} shows that a subfactor planar
algebra of finite depth is singly generated with a finite presentation.
If $P$ is a subfactor planar algebra of depth $k$, it is shown there that
a single $2k$-box generates $P$. It is natural to ask what  the smallest
$s$ is such that a single $s$-box generates $P$. While we do not resolve this question completely, we show in this note
that $s  \leq min\{k+4,2k\}$ and that $k$ does not suffice in general.
All  terminology and unexplained notation will be as in \cite{KdyTpr2010}.

For the rest of the paper fix a subfactor planar algebra $P$ of finite
depth $k$. Let $2t$ be such that it is the even number of $k+3$ and $k+4$.
We will show that some $s$-box generates $P$ as a planar algebra, where $s = min\{2k,2t\}$.
%
The main observation is the following result about involutive algebra anti-automorphisms of finite-dimensional complex
semisimple algebras. We mention as a matter of terminology that we
always deal with ${\mathbb C}$-algebra anti-automorphisms and
automorphisms (as opposed to those that might induce a non-identity involution on the base field ${\mathbb C}$).
Also, as in common in Hopf algebra literature, we will
use $Sa$ instead of $S(a)$ to demote the image of $a$ under a map $S$ of algebras.

\begin{theorem}\label{gen}
Let $A$ be a finite-dimensional complex semisimple algebra and let $S:A \rightarrow A$
be an involutive algebra anti-automorphism. 
Suppose that $A$ has no $2 \times 2$ matrix summand.
Then, there exists $a \in A$ such that
$a$ and $Sa$ generate $A$ as an algebra.
\end{theorem}

Before beginning the proof of this theorem, we observe that the somewhat peculiar restriction on $A$ not having an
$M_2({\mathbb C})$ summand is really necessary.

\begin{remark}
The map $S : M_2({\mathbb C}) \rightarrow M_2({\mathbb C})$ defined by $Sa = adj(a)$
is easily verified to be an involutive algebra anti-automorphism, while
there exists no $a \in M_2({\mathbb C})$ that together with $Sa$ generates
$M_2({\mathbb C})$ since these generate only a commutative
subalgebra.
\end{remark}

We pave the way for a proof of Theorem \ref{gen} by studying the two special cases when $A = M_n({\mathbb C})$ and $A = M_n({\mathbb C}) \oplus
M_n({\mathbb C})$. In these, $n$ is a fixed positive integer.
We will need the following lemmas that specify a `standard form' for each of these two special cases.

\begin{lemma}\label{fixed}
Let $S$ be an involutive algebra anti-automorphism of  $M_n({\mathbb C})$. There is an algebra automorphism of  $M_n({\mathbb C})$ under which  $S$ is identified with either (i) the transpose map or (ii)  the transpose map followed by conjugation by the matrix
$$
J = \left[
\begin{array}{cc}
0 & I_k\\
-I_k & 0
\end{array}
\right] ( = -J^T = -J^{-1}).
$$
The second case may arise only when $n = 2k$ is even (and $I_k$
denotes, of course, the identity matrix of size $k$).
\end{lemma}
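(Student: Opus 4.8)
The plan is to put $S$ into a standard normal form, exploit the involutivity to constrain that form, and then appeal to the congruence classification of nondegenerate bilinear forms over $\mathbb{C}$. First I would compose $S$ with the transpose map $T\colon X \mapsto X^T$. Since $T$ is itself an anti-automorphism of $M_n(\mathbb{C})$, the composite $S \circ T$ is an algebra \emph{auto}morphism, and by the Skolem--Noether theorem it is inner: there is an invertible $g$ with $S(X^T) = g X g^{-1}$ for all $X$. Replacing $X$ by $X^T$ yields the normal form $S(X) = g X^T g^{-1}$, so every anti-automorphism is transpose followed by an inner automorphism.

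Next I would impose $S^2 = \mathrm{id}$. A direct computation gives $S^2(X) = g (g^T)^{-1}\, X\, g^T g^{-1}$, so $S^2 = \mathrm{id}$ forces $g (g^T)^{-1}$ to be central, i.e. $g = \lambda g^T$ for some scalar $\lambda \in \mathbb{C}^\times$. Transposing this relation and substituting back gives $\lambda^2 = 1$, so $\lambda = \pm 1$; that is, $g$ is either symmetric ($\lambda = 1$) or skew-symmetric ($\lambda = -1$). Since $g$ is determined by $S$ only up to a nonzero scalar, and scaling preserves the symmetry type, this dichotomy is a genuine invariant of $S$. I would then observe that conjugating $S$ by an inner automorphism $\mathrm{Ad}(h)$ replaces $g$ by the congruent matrix $h^{-1} g (h^{-1})^T$ (up to the irrelevant scalar), and that as $h$ ranges over all invertible matrices this realizes the entire congruence orbit of $g$. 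Finally I would invoke the standard normal forms over $\mathbb{C}$: every invertible symmetric matrix is congruent to the identity $I$, giving case (i) where $S$ becomes $X \mapsto X^T$; and every invertible skew-symmetric matrix is congruent to $J$, giving case (ii) where $S$ becomes $X \mapsto J X^T J^{-1}$. Since an invertible skew-symmetric matrix exists only in even size, case (ii) forces $n = 2k$.

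The genuinely substantive ingredients are the two classical facts being assembled — Skolem--Noether for the normal form and the congruence classification of symmetric and alternating forms over an algebraically closed field — so the main thing to get right is the bookkeeping that ties conjugation of $S$ to congruence of $g$ in the correct direction. I expect the one point demanding care is verifying that the inner-automorphism action on $S$ induces precisely the congruence action $g \mapsto h^{-1} g (h^{-1})^T$, and confirming that the leftover scalar never interferes with the symmetry type; once that is checked, the reduction to $I$ or $J$ and the parity constraint follow immediately.
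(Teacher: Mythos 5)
Your argument is correct and follows essentially the same route as the paper: reduce $S$ to the normal form $X \mapsto gX^Tg^{-1}$ via Skolem--Noether, deduce from $S^2=\mathrm{id}$ that $g$ is symmetric or skew-symmetric, and then normalize $g$ to $I$ or $J$ by congruence (the paper phrases this last step as Takagi's factorization $u = v^Tv$ or $v^TJv$, which is exactly the congruence classification you invoke). The only differences are cosmetic --- the paper composes in the other order to write $Sx = (uxu^{-1})^T$ --- and your explicit verification that conjugating $S$ by $\mathrm{Ad}(h)$ acts on $g$ by congruence is the same bookkeeping the paper leaves implicit.
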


\begin{proof} 
Let $T$ denote the transpose
map on $M_n({\mathbb C})$. The composite map $TS$ is then an algebra automorphism of $M_n({\mathbb C})$ and is consequently given by
conjugation with an invertible matrix, say $u$. Thus $Sx = (uxu^{-1})^T$.
Involutivity of $S$ implies that $u$ is either symmetric or skew-symmetric.  By Takagi's factorization (see p204 and p217 of \cite{HrnJhn1990}), $u$ is of the form $v^Tv$
if it is symmetric and of the form $v^TJv$ if it is skew-symmetric, for
some invertible $v$. For the algebra automorphism of $M_n({\mathbb C})$
 given by conjugation with $v$, $S$ gets identified in the symmetric case with the transpose map and in the skew-symmetric case with the  transpose map followed by conjugation by $J$.
\end{proof}

\begin{lemma}\label{invol}
Let $S$ be an involutive algebra anti-automorphism of $M_n({\mathbb C}) \oplus
M_n({\mathbb C})$ that interchanges the two minimal central projections. There is an algebra automorphism of  $M_n({\mathbb C}) \oplus
M_n({\mathbb C})$ fixing the minimal central projections under which  $S$ is identified with the map $x \oplus y \mapsto y^T \oplus x^T$.
\end{lemma}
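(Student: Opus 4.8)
The plan is to exploit the interchange of the central projections to decompose $S$ into a single anti-automorphism of $M_n(\mathbb{C})$ and its inverse, and then to straighten that anti-automorphism to the transpose by an inner automorphism of the first summand alone. Write $e_1 = 1 \oplus 0$ and $e_2 = 0 \oplus 1$ for the two minimal central projections, so that $S(e_1) = e_2$ and $S(e_2) = e_1$. Since $S$ is an anti-automorphism, for $x$ in the first summand we have $S(x \oplus 0) = S(e_1)\,S(x \oplus 0)\,S(e_1) \in e_2 A e_2$, the second summand; similarly $S$ carries the second summand into the first. Thus there are maps $\phi, \psi : M_n(\mathbb{C}) \to M_n(\mathbb{C})$ with $S(x \oplus 0) = 0 \oplus \phi(x)$ and $S(0 \oplus y) = \psi(y) \oplus 0$, and a direct check using $S(ab) = S(b)S(a)$ shows that $\phi$ and $\psi$ are each algebra anti-automorphisms of $M_n(\mathbb{C})$. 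Additivity then gives $S(x \oplus y) = \psi(y) \oplus \phi(x)$, and involutivity $S^2 = \mathrm{id}$ forces $\psi \circ \phi = \mathrm{id}$, i.e.\ $\psi = \phi^{-1}$.

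First I would record, exactly as in the proof of Lemma \ref{fixed}, that the composite of $\phi$ with the transpose map is an algebra automorphism of $M_n(\mathbb{C})$ and hence inner; this yields an invertible $w$ with $\phi(x) = w x^T w^{-1}$, and consequently $\phi^{-1}(y) = w^T y^T (w^T)^{-1}$. Now define the automorphism $\Theta$ of $M_n(\mathbb{C}) \oplus M_n(\mathbb{C})$ by $\Theta(x \oplus y) = w^T x (w^T)^{-1} \oplus y$. This manifestly fixes $e_1$ and $e_2$, and a short computation with the identities $(ABC)^T = C^T B^T A^T$ and $(M^{-1})^T = (M^T)^{-1}$ shows that $\Theta^{-1} S \,\Theta\,(x \oplus y) = y^T \oplus x^T$, which is the asserted normal form.

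The only real content is the bookkeeping in this last step: one must check that a single conjugation, applied to the first summand only, simultaneously normalizes both the $\phi$-block and the $\phi^{-1}$-block. I expect this to be the main, though routine, obstacle. The point is that the two resulting requirements, namely $\phi\bigl(w^T x (w^T)^{-1}\bigr) = x^T$ in the second slot and the corresponding identity $(w^T)^{-1}\phi^{-1}(y)\,w^T = y^T$ in the first, are not independent but are tied together precisely by the relation $\psi = \phi^{-1}$, so that one choice of $w$ settles both. Unlike Lemma \ref{fixed}, no constraint on the parity of $n$ and no Takagi-type factorization are needed here, since the symmetry/skew-symmetry dichotomy of the single-block case is absorbed into the pairing of the two summands.
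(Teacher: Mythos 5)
Your argument is correct and is essentially the paper's proof in a slightly different packaging: the paper composes $S$ with the model map $x \oplus y \mapsto y^T \oplus x^T$ to get an inner automorphism $u(\cdot)u^{-1} \oplus v(\cdot)v^{-1}$ and uses involutivity to force $v = u^T$ (up to a scalar), whereas you extract the cross anti-automorphisms $\phi, \psi$ first and get $\psi = \phi^{-1}$ at the level of maps --- the same relation, obtained a line earlier. Both proofs then finish with the identical single conjugation on the first summand, so this counts as the same approach, with your version marginally cleaner in that it avoids the ``central, hence scalar, hence rescale'' step.
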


\begin{proof}
The map $x \oplus y \mapsto S(y^T \oplus x^T)$ is an algebra automorphism
of $M_n({\mathbb C}) \oplus
M_n({\mathbb C})$ fixing the minimal central projections and is therefore given by $x \oplus y \mapsto uxu^{-1} \oplus vyv^{-1}$
for invertible $u,v$. Hence $S(x \oplus y) = uy^Tu^{-1} \oplus vx^Tv^{-1}$.

Thus, $S^2(x \oplus y) = u(v^{-1})^Txv^Tu^{-1} \oplus v(u^{-1})^Tyu^Tv^{-1}$.
Involutivity of $S$ now implies that $v^Tu^{-1}$ and $u^Tv^{-1}$ are both
scalar matrices, or equivalently, $v^T = \lambda u$ and $u^T = \mu v$ for
non-zero scalars $\lambda, \mu$. Taking transposes shows that $\lambda \mu = 1$ and finally, by replacing $u$ by $\lambda u$,
 we may assume that $v = u^T$. 
%

The commutativity of the following diagram:
\begin{center}
$\begin{CD}
M_n({\mathbb C}) \oplus
M_n({\mathbb C}) @>x \oplus y \mapsto u^{-1}xu \oplus y>> M_n({\mathbb C}) \oplus
M_n({\mathbb C})\\
@VVSV @VVx \oplus y \mapsto y^T \oplus x^TV\\
M_n({\mathbb C}) \oplus
M_n({\mathbb C}) @>x \oplus y \mapsto u^{-1}xu \oplus y>> M_n({\mathbb C}) \oplus
M_n({\mathbb C})
\end{CD}$
\end{center}
now implies that under the algebra automorphism of $M_n({\mathbb C}) \oplus
M_n({\mathbb C})$ given by $x \oplus y \mapsto u^{-1}xu \oplus y$, $S$
is identified with $x \oplus y \mapsto y^T \oplus x^T$.
\end{proof}

The proof of Theorem \ref{gen} in the case $A = M_n({\mathbb C})$ (for $n \neq 2$) needs some preparation. For a subset $S \subseteq M_n({\mathbb C})$ we use the notation $S^\prime$, as usual, to denote
its commutant in $M_n({\mathbb C})$.

\begin{lemma}\label{nonempty}
If $U \subseteq {\mathbb C}^{2N}$ is non-empty and Zariski open then $$U \cap \{(z_1,\cdots,z_N,\overline{z_1},\cdots,\overline{z_N}) : z_i \in {\mathbb C}\} \neq \emptyset.$$
\end{lemma}

\begin{proof} It suffices to see that $S = \{(z_1,\cdots,z_N,\overline{z_1},\cdots,\overline{z_N}) : z_i \in {\mathbb C}\}$ is Zariski dense in ${\mathbb C}^{2N}$. If a polynomial $f$ in $2N$ variables vanishes on $S$, then the polynomial $p(u_1,\cdots,u_N,v_1,\cdots v_N) = f(u_1+iv_1,\cdots,u_N+iv_N,
u_1-iv_1,\cdots,u_N-iv_N)$ vanishes on ${\mathbb R}^{2N}$. It is then easily seen by induction on the number of variables that $p$ identically vanishes and then, so does $f$.
\end{proof}

\begin{proposition}\label{open}
For $n > 1$, the set
\begin{eqnarray*}
U = \left\{ (P,Q) \in M_n({\mathbb C}) \times
M_n({\mathbb C}):   P,Q {\mbox {~invertible and~}}  
\left.
\left.
\begin{array}{cc}
 & \\
 & 
\end{array}
\right.
\right.^{}
\right. \\ 
\left.
\left\{
\left[
\begin{array}{cc}
0 & P\\
Q & 0
\end{array}
\right],
\left[
\begin{array}{cc}
0 & P^T\\
Q^T & 0
\end{array}
\right]
\right\}^\prime = {\mathbb C}I_{2n}
\right\}.
\end{eqnarray*}
is a non-empty Zariski open subset of $M_n({\mathbb C}) \times
M_n({\mathbb C}).$
\end{proposition}

\begin{proof}
For an arbitrary matrix
$
\left[
\begin{array}{cc}
X & Y\\
Z & W
\end{array}
\right] \in M_{2n}({\mathbb C})
$,
the condition that it commute with both 
$\left[
\begin{array}{cc}
0 & P\\
Q & 0
\end{array}
\right]$ and $
\left[
\begin{array}{cc}
0 & P^T\\
Q^T & 0
\end{array}
\right]$
is given by a set of $8n^2$ homogeneous linear equations in the $4n^2$ entries of $X,Y,Z,W$ with coefficients
(linear) polynomials in the entries of $P$ and $Q$. 

The solution space for this system is at least one dimensional (since it certainly contains the identity matrix) and thus the coefficient matrix has rank at most $4n^2-1$. The condition that the solution space is exactly one dimensional is hence equivalent to the condition that the coefficient matrix has rank at least $4n^2-1$, which
is clearly Zariski open condition in the entries of $P$ and $Q$. It follows that $U$ is Zariski open.

%
%

To show non-emptiness of $U$, choose an invertible $Q \in M_n({\mathbb C})$ such that $Q$ and $Q^T$ generate $M_n({\mathbb C})$ as an algebra. For instance,
$Q$ could be $I_n + N_n$ where $N_n$ is the $n \times n$ nilpotent matrix with super-diagonal entries all $1$ and $0$ entries elsewhere. The condition that
$
\left[
\begin{array}{cc}
X & Y\\
Z & W
\end{array}
\right] \in M_{2n}({\mathbb C})
$ commutes with both 
$\left[
\begin{array}{cc}
0 & I\\
Q & 0
\end{array}
\right]$ and $
\left[
\begin{array}{cc}
0 & I\\
Q^T & 0
\end{array}
\right]$
is equivalent to the set of equations:
\begin{eqnarray*}
&YQ=QY=Z=YQ^T=Q^TY\\
&WQ=QX,~X=W,~WQ^T=Q^TX.
\end{eqnarray*}
Since $Y$ commutes with $Q$ and $Q^T$ (which generate $M_n({\mathbb C})$),  $Y = \lambda I_n$ for a scalar $\lambda \in {\mathbb C}$. Thus
$Z = \lambda Q = \lambda Q^T$. Now (and this is the crucial point where
$n>1$ is needed), since $Q$ and $Q^T$ generate $M_n({\mathbb C})$ which is not commutative, they cannot be equal and so $\lambda=0$. Since $X=W$ and hence commutes with both $Q$ and $Q^T$, $X=W=\mu I$ for some scalar $\mu \in {\mathbb C}$. Thus $(I,Q) \in U$.
\end{proof}

\begin{proposition}\label{prop1}
Let $S$ be an involutive algebra anti-automorphism of  $M_m({\mathbb C})$ with $m \neq 2$.
There exists
invertible $x \in M_m({\mathbb C})$ which, together with $Sx$, generates $M_m({\mathbb C})$ as an algebra.
\end{proposition}

\begin{proof}
First, we may assume by Lemma \ref{fixed} that $S$ is either (i) the transpose map or (ii) the transpose map followed by conjugation by $J$. In Case (i),
as in the proof of Proposition \ref{open}, $x =I_m+N_m$ is invertible and such that
$x$ and $Sx$ generate $M_m({\mathbb C})$ as an algebra.

In Case (ii), $m =2n$ is necessarily even. It then follows from Proposition \ref{open} and Lemma \ref{nonempty} that there is an invertible $P \in M_n({\mathbb C})$ such that
$$
\left\{
\left[
\begin{array}{cc}
0 & P\\
\overline{P} & 0
\end{array}
\right],
\left[
\begin{array}{cc}
0 & P^T\\
\overline{P}^T & 0
\end{array}
\right]
\right\}^\prime = {\mathbb C}I_{2n}
$$
The commutant of these two matrices is the same as that of the algebra
they generate which is a $*$-subalgebra of $M_m({\mathbb C})$ since
they are adjoints of each other. By the double commutant theorem, it follows that the algebra generated by these is the whole of $M_m({\mathbb C})$. Now take $x = 
\left[
\begin{array}{cc}
0 & P\\
\overline{P} & 0
\end{array}
\right]
$.
\end{proof}

In proving Theorem \ref{gen} for $A = M_n({\mathbb C}) \oplus M_n({\mathbb C})$, we will need the following lemma.

\begin{lemma}\label{genl}
Let $A$ and $B$ be finite dimensional complex unital algebras and
let $a \in A$ and $b \in B$ be invertible. Then, for all but finitely many
$\lambda \in {\mathbb C}$, the  algebra generated by $a \oplus \lambda b \in A \oplus B$ contains both $a~(=a \oplus 0)$ and $b~(= 0 \oplus b)$.
\end{lemma}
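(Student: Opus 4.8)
The plan is to describe the subalgebra generated by $c := a \oplus \lambda b$ explicitly as $\mathbb{C}[c]$, the set of polynomial expressions in $c$, and to extract $a \oplus 0$ and $0 \oplus b$ from it by a Chinese Remainder Theorem argument on minimal polynomials. Writing $m_a$ for the minimal polynomial of $a$ in $A$ and $m_{\lambda b}$ for that of $\lambda b$ in $B$, the point is that any polynomial $p$ evaluates as $p(c) = p(a) \oplus p(\lambda b)$, so to produce $a \oplus 0$ it suffices to find a single $p$ with $p \equiv x \pmod{m_a}$ and $p \equiv 0 \pmod{m_{\lambda b}}$; then $p(a) = a$ and $p(\lambda b) = 0$. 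Once $a \oplus 0 \in \mathbb{C}[c]$ is in hand, subtracting it from $c$ gives $0 \oplus \lambda b$, and dividing by $\lambda$ (for $\lambda \neq 0$) yields $0 \oplus b$, so both $a$ and $b$ are recovered.

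First I would record that such a $p$ exists precisely when $m_a$ and $m_{\lambda b}$ are coprime in $\mathbb{C}[x]$, which by CRT is the only hypothesis genuinely needed. So the task reduces to controlling the set of $\lambda$ for which these two minimal polynomials share a root. The roots of $m_a$ are the spectral values $\alpha_1, \ldots, \alpha_s$ of $a$, and the roots of $m_{\lambda b}$ are exactly $\lambda \mu_1, \ldots, \lambda \mu_r$, where $\mu_1, \ldots, \mu_r$ are the spectral values of $b$. Here is where invertibility enters: since $a$ and $b$ are invertible, none of the $\alpha_i$ or $\mu_j$ is zero (if $m_a$ had $0$ as a root, then $m_a(x) = x\, q(x)$ and $a\, q(a) = 0$ would force $q(a) = 0$, contradicting minimality). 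Consequently a shared root forces $\alpha_i = \lambda \mu_j$, i.e. $\lambda = \alpha_i / \mu_j$ for some $i, j$; there are only finitely many such values, so for all $\lambda$ outside this finite set (and $\lambda \neq 0$) the polynomials $m_a$ and $m_{\lambda b}$ are coprime.

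If one takes the generated algebra to be unital, the argument is already complete. To handle the non-unital reading, matching the ``positive degree monomials'' viewpoint of Corollary \ref{cor1}, I would additionally arrange $p(0) = 0$ so that $p(c)$ is genuinely a positive-degree polynomial in $c$. This only requires also imposing $p \equiv 0 \pmod{x}$, which CRT permits as soon as $x$ is coprime to both $m_a$ and $m_{\lambda b}$; but $\gcd(x, m_a) = 1$ is exactly the invertibility of $a$, and $\gcd(x, m_{\lambda b}) = 1$ the invertibility of $\lambda b$ (that is, of $b$, since $\lambda \neq 0$). Thus both invertibility hypotheses are used, and for all but finitely many $\lambda$ we may solve the three simultaneous congruences and conclude.

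The step I expect to be the main, though modest, obstacle is the bookkeeping in the second paragraph: being careful that invertibility of $a$ and $b$ really does force the bad set to be finite, i.e. that zero cannot be a shared root independently of $\lambda$. Everything else is a routine application of the Chinese Remainder Theorem in the principal ideal domain $\mathbb{C}[x]$.
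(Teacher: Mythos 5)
Your proof is correct and follows essentially the same route as the paper's: both reduce the problem to the coprimality of the minimal polynomials of $a$ and $\lambda b$, note that invertibility excludes $0$ as a root so that the only bad values are the finitely many quotients $\lambda = \alpha_i/\mu_j$ of spectral values, and then apply the Chinese Remainder Theorem in $\mathbb{C}[x]$ to produce the required polynomial. Your additional congruence $p \equiv 0 \pmod{x}$ is a clean alternative to the paper's observation that the algebra generated by the invertible element $a \oplus \lambda b$ in a finite dimensional algebra is automatically unital.
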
 

\begin{proof} We may assume that $\lambda \neq 0$ and then it suffices to see that $a$ is expressible as a polynomial
in $a \oplus \lambda b$. 
Note that since $a \oplus \lambda b$ is invertible and $A \oplus B$
is finite dimensional, the algebra generated by $a \oplus \lambda b$
is actually unital. In particular, it makes sense to evaluate any complex
univariate polynomial on $a \oplus \lambda b$.

Let $p(X)$ and $q(X)$ be the minimal polynomials of
$a$ and $b$ respectively. By invertibility of $a$ and $b$, neither
$p$ nor $q$ has $0$ as a root. The minimal polynomial of $\lambda b$
is $q(\frac{X}{\lambda})$. Unless $\lambda$ is the quotient of a root of
$p$ by a root of $q$, $p(X)$ and $q(\frac{X}{\lambda})$ will have no common roots and therefore be coprime. So there will exist a polynomial $r(X)$ that is divisible by
$q(\frac{X}{\lambda})$ but is $X$ modulo $p(X)$. Thus $r(a \oplus \lambda b) = a$, as desired.
\end{proof}

\begin{proposition}\label{cor2} Let $S$ be an involutive algebra anti-automorphism of $M_n({\mathbb C}) \oplus
M_n({\mathbb C})$ that interchanges the two minimal central projections.
There exists invertible $x \oplus y \in M_n({\mathbb C}) \oplus
M_n({\mathbb C})$ which together with $S(x \oplus y)$ generates $M_n({\mathbb C})   \oplus
M_n({\mathbb C})$ as an algebra.
\end{proposition}

\begin{proof} First, by Lemma \ref{invol}, we may assume that
$S$ is the map $x \oplus y \mapsto y^T \oplus x^T$. Now, as in the proof of Proposition \ref{prop1}, there is an invertible $x \in M_n({\mathbb C})$ such that $x$ and $x^T$ generate $M_n({\mathbb C})$.
By Lemma~\ref{genl}, for all but finitely many $\lambda \in {\mathbb C}$,
the  algebra generated by $x \oplus \lambda x$ contains
$x \oplus 0$ and $0 \oplus x$ and similarly the  algebra generated by $\lambda x^T \oplus x^T$ contains $x^T \oplus 0$ and $0 \oplus x^T$.
Thus the algebra generated by $x \oplus \lambda x$ and $\lambda x^T \oplus x^T$ is the whole of $M_n({\mathbb C})   \oplus
M_n({\mathbb C})$.
%
%
%
%
\end{proof}

\begin{proof}[Proof of Theorem \ref{gen}] Let $\hat{A}$ denote the (finite) set of all inequivalent irreducible representations of $A$
and for $\pi \in \hat{A}$, let 
$d_\pi$ denote its dimension.
Since $S$ is an involutive anti-automorphism, it acts as an involution on the set of minimal central projections of $A$.
It is then easy to see
that there exist 
subsets $\hat{A}_1$ and $\hat{A}_2$ of $\hat{A}$ and an
identification
$$
A \rightarrow \bigoplus_{\pi \in \hat{A}_1} M_{d_\pi}({\mathbb C}) \oplus 
\bigoplus_{\pi \in \hat{A}_2} (M_{d_\pi}({\mathbb C}) \oplus M_{d_\pi}({\mathbb C}))
$$
such that 
%
%
%
each summand is $S$-stable.

Now, by Propositions  \ref{prop1} and \ref{cor2}, in each summand of the above decomposition, either $M_{d_\pi}({\mathbb C})$ or $M_{d_\pi}({\mathbb C}) \oplus M_{d_\pi}({\mathbb C})$, there is an invertible element which together with its image under $S$ generates that summand.

Finally, an inductive application of Lemma \ref{genl} shows that if $a$
is a
general linear combination of these generators, then $a$ and $Sa$  generate $A$ as an algebra.
\end{proof}

Before we prove our main result, we will need a result
about connected pointed bipartite graphs. Recall that a bipartite graph has its vertex set partitioned into `even' and
`odd' vertices and all its edges connect an even and an odd vertex. It is pointed if a certain even vertex, normally denoted by $*$, is distinguished. Its depth is the largest
distance of a vertex from $*$.

\begin{proposition}\label{graph}
Let $\Gamma$ be a connected pointed bipartite graph of depth $k \geq 3$. For any vertex $v$ of $\Gamma$, let
$t$ be the one of $k+3,k+4$ with the same parity as
$v$. The number of paths of length $t$ from $*$ to $v$
is at least $3$.
\end{proposition}

\begin{proof}
We analyse three cases depending on the distance of $v$ from $*$.\\
Case I: If $v=*$, note that $t \geq 6$ is even. To show that there are at least 3 paths of length $t$ from $*$ to $*$, it suffices to show that there are at least 3 paths
of length 6 from $*$ to $*$. Since $k \geq 3$, choose any vertex at distance 2 from $*$ and a path from $*$ to the chosen vertex. It is easy to see that there are at least 3 paths of length 6 from $*$ to $*$ 
supported on the edges of this path.\\
%
Case II: If $v$ is at distance 1 from $*$, then $t \geq 7$ is odd. As observed in Case I, there are at least 3 paths of length 6 from $*$ to $*$ and consequently
at least 3 paths of length 7 from $*$ to $v$.\\
Case III: Suppose $v$ is at a distance $n$ from $*$, where $n>1$. 
Observe that if $n$ and $k$ have the same parity, then $n \leq k$ while in the other case, $n \leq k-1$.
Choose a path $\xi_{1} \xi_{2} \xi_{3} \cdots \xi_{n}$ from $*$ to $v$.
Then $\xi_2 \neq \overline{\xi_1}.$ 
Then we have three paths 
$\xi_{1} \overline{\xi_{1}} \xi_{1} \overline{\xi_{1}} \xi_{1} \xi_{2} \cdots \xi_{n}$,
 $\xi_{1} \xi_{2} \overline{\xi_2} \xi_{2} \overline{\xi_{2}} \xi_{2} \cdots \xi_{n}$, and
 $\xi_{1} \overline{\xi_{1}} \xi_{1} \xi_{2} \overline{\xi_2} \xi_{2} \cdots \xi_{n}$ of length $n+4$ from $*$ to $v$. 
Thus if $n$ and $k$ have the same parity, so that $t=k+4$, then there exist at least 3 paths of length $t$ from $*$ to $v$.
If $n$ and $k$ have opposite parity then $t = k+3$ and since $n \leq k-1$ in this case, since there exist at least 3 distinct paths of length $n+4$ from
$*$ to $v$, there also exist 3 distinct paths of length $t$ from $*$ to $v$.
%
%
\end{proof}

We now prove the main result of this note.

\begin{theorem}
Let $P$ be a subfactor planar algebra of finite depth $k$. Let $2t$ be
the even number in $\{k+3, k+4\}$. Let $s=min\{2k,2t\}$. Then $P$ is generated by a single $s$-box.
\end{theorem}

\begin{proof}
Case I: If $k \leq 3$, $s=2k$. Then by Proposition 5.1 of \cite{KdyTpr2010}, $P$ is generated by a single $s$ box.

Case II: If $k>3$, so that $s=2t$, let $\Gamma$ be the principal graph of the subfactor planar algebra $P$. Then from Proposition \ref{graph}, the number of paths of length $s$ from the $*$-vertex to any even vertex $v$ in $\Gamma$ is at least 3. So $P_{s}$ does not have an $M_{2}(\mathbb{C})$ summand.
Consider the $t^{\mbox {\tiny{th}}}$ power, say $X$, of the $s$-rotation tangle. This tangle changes the position of $*$ on an $s$-box from the top left to the bottom right position. Clearly $Z^P_X :P_{s} \to P_{s}$ is an involutive algebra anti-automorphism. 
From Theorem \ref{gen}, there exists an element $a \in P_s$ such that $a$ and $Z^P_X(a)$ generate $P_{s}$ as an algebra.
Since $s \geq k$, the planar algebra generated by $P_s$ contains $P_k$ and thus is the whole of $P$.
Hence the single $s-$box containing $a$ generates the planar algebra $P$.
%
\end{proof}

We finish by showing that $k+1$ might actually be needed.

\begin{example} Let $P = P(V)$ be the tensor planar algebra (see \cite{Jns1999}) for details) of a vector space
$V$ of dimension greater than $1$. It is easy to see that $depth(P) =1$.
However, given any $a \in P_1 = End(V)$, if $Q$ is the planar subalgebra of $P$
generated by $a$, a little thought shows that  $Q_1$ is just the algebra generated by $a$ and
is hence abelian while $P_1$ is not.
\end{example}



\section*{Acknowledgments}
We are grateful to Prof. T. Y. Lam for his remarks and to the referee for a
very careful reading.

\end{document}